\title{Small planar hypohamiltonian graphs}
\date{}
\author{Cheng-Chen Tsai\footnote{Email address: \href{mailto:u_e_smvdv@yahoo.com.tw}{u\_e\_smvdv@yahoo.com.tw}}}
\affil{\normalsize Institute of Information Science, Academia Sinica, Taiwan}
\newtheoremstyle{plain}
{}
{}
{\slshape}
{}
{\bfseries}
{.}
{5pt}
{}
\newtheorem{theorem}{Theorem}[section]
\newtheorem{lemma}[theorem]{Lemma}
\newtheorem{corollary}[theorem]{Corollary}
\newcommand{\nextLine}{\hphantom{a}}
\newcommand{\mmod}[3]{#1\equiv#2\ (\mathrm{mod}\ #3)}
\begin{document}

\maketitle
\vspace{-38pt}
\section*{Abstract} A graph is hypohamiltonian if it is non-Hamiltonian, but the deletion of every single vertex gives a Hamiltonian graph. Until now, the smallest known planar
hypohamiltonian graph had 40 vertices, a result due to Jooyandeh, McKay, {\" O}sterg{\aa}rd, Pettersson, and Zamfirescu. That result is here improved upon by two planar hypohamiltonian graphs on 34 vertices. We exploited a special subgraph contained in two graphs of Jooyandeh et al., and modified it to construct the two 34-vertex graphs and six planar hypohamiltonian graphs on 37 vertices. Each of the 34-vertex graphs has 26 cubic vertices, improving upon the result of Jooyandeh et al. that planar hypohamiltonian graphs have 30 cubic vertices. We use the 34-vertex graphs to construct hypohamiltonian graphs of order 34 with crossing number 1, improving the best-known bound of 36 due to Wiener. Whether there exists a planar hypohamiltonian graph on 41 vertices was an open question. We settled this question by applying an operation introduced by Thomassen to the 37-vertex graphs to obtain several planar hypohamiltonian graphs on 41 vertices. The 25 planar hypohamiltonian graphs on 40 vertices of Jooyandeh et al. have no nontrivial automorphisms. The result is here improved upon by six planar hypohamiltonian graphs on 40 vertices with nontrivial automorphisms.

\section{Introduction}
A graph is \emph{hypohamiltonian} if it is non-Hamiltonian, but the deletion of every single vertex gives a Hamiltonian graph. A graph \(G=(V,E)\) is \emph{almost hypohamiltonian} if there exists a \(w\in V\) such that \(G-w\) is non-Hamiltonian, but for every vertex \(v\neq w\) in \(G\), the graph \(G-v\) is hamiltonian. We shall call \(w\) the
\emph{exceptional vertex} of \(G\).

In 1973, Chv{\'a}tal \cite{Chvátal_1973} raised the problem whether there exist planar hypohamiltonian graphs, and offered \$5 for its solution (Problem 19 of \cite{Chvátal_1972}). In 1974, Gr{\"u}nbaum \cite{GRUNBAUM197431} conjectured that there are no planar hypohamiltonian graphs. In 1976, Thomassen \cite{THOMASSEN1976377} gave infinitely many counterexamples, the smallest being of order 105. Various authors have improved this upper bound for the order of the smallest planar hypohamiltonian graph to order 57 (Hatzel \cite{HATZEL79} in 1979), 48 (C. T. Zamfirescu and T. Zamfirescu \cite{ZamfirescuZ07} in 2007), 42 (Araya and Wiener \cite{WienerA11} in 2011), and 40 (Jooyandeh et al. \cite{JooyandehMOPZ17} in 2017).

The lower bound for the order of the smallest planar hypohamiltonian graph was given by J. Goedgebeur and C. T. Zamfirescu \cite{GoedgebeurZ17}.
\begin{theorem}[{\cite[Theorem 3.2]{GoedgebeurZ17}}]\label{thm:gz3_2}
The smallest planar hypohamiltonian graph has at least \(23\) vertices.
\end{theorem}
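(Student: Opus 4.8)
The plan is to reduce the statement to a finite, if large, computer verification, supported by a few elementary structural restrictions that keep the search space manageable. First I would record the necessary conditions that any hypohamiltonian graph $G$ must satisfy. If a vertex $v$ had degree at most two, then deleting one of its neighbours would leave $v$ with degree at most one, and a graph on at least three vertices containing a vertex of degree at most one has no Hamiltonian cycle; this would contradict hypohamiltonicity, so $G$ has minimum degree at least three. Moreover, it is well known that hypohamiltonian graphs are $3$-connected, which I would use as an additional filter. Adjoining planarity, Euler's formula bounds the number of edges by $3n-6$ on $n$ vertices, so every candidate is a sparse, $3$-connected planar graph of minimum degree $3$.

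The core of the argument is then an exhaustive, isomorphism-free enumeration. I would use a planar-graph generator (for instance \texttt{plantri}) to produce, up to isomorphism, every $3$-connected planar graph of minimum degree $3$ on $n$ vertices for each $n$ in the relevant range up to $22$; no smaller order than that of the known smallest hypohamiltonian graph, the Petersen graph on $10$ vertices, need be examined, so $10 \le n \le 22$ suffices. For each generated graph $G$ I would run the hypohamiltonicity test directly: verify that $G$ itself admits no Hamiltonian cycle, and that $G-v$ admits one for every vertex $v$. Either failure discards $G$, and I would order the checks so that the cheapest disqualifying test is attempted first — exhibiting a single Hamiltonian cycle of $G$, for example, rules it out at once. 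Finding that the output is empty for every such $n$ establishes the bound of $23$.

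The main obstacle is computational scale rather than conceptual difficulty: the number of $3$-connected planar graphs grows rapidly with $n$, and for each one I must certify both non-Hamiltonicity (a statement that in principle requires ruling out all cycles) and Hamiltonicity of each of the $n$ vertex-deleted subgraphs. To make this feasible I would rely on a generator performing isomorphism rejection during construction, via orderly generation or the canonical construction path, so that no graph is inspected twice, together with an optimized exact backtracking routine for Hamiltonicity, which is fast at these small orders. The delicate point on which correctness of the lower bound ultimately rests is a careful audit of the generator's completeness, namely that it really outputs every relevant graph exactly once; as a cross-check I would intersect an independently produced catalogue of all hypohamiltonian graphs of small order with a planarity filter, and confirm that it too yields nothing below order $23$.
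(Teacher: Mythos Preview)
The paper does not supply a proof of this statement: Theorem~\ref{thm:gz3_2} is quoted from \cite[Theorem~3.2]{GoedgebeurZ17} purely as background for the bounds on $h$ and $\overline{\alpha}_0$, and no argument for it appears anywhere in the present paper. There is consequently nothing here to compare your proposal against.

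For what it is worth, your outline is a legitimate route to the result, and the cited reference also rests on an exhaustive computer search, though organised the other way round: Goedgebeur and Zamfirescu generate all hypohamiltonian graphs up to the target order using a dedicated generator that exploits several structural lemmas, and then filter the output for planarity, rather than generating $3$-connected planar graphs and testing each for hypohamiltonicity as you suggest. Either direction works in principle; theirs is computationally the lighter one, since hypohamiltonian graphs are far rarer than $3$-connected planar graphs at orders near $22$, which is the practical bottleneck you yourself flag.
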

\begin{theorem}[{\cite[Theorem 3.3]{GoedgebeurZ17}}]\label{thm:gz3_3} The smallest planar hypohamiltonian graph with girth at least \(4\) has at least \(27\) vertices.
\end{theorem}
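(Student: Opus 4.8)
The plan is to turn the claim into a finite, exhaustive computer search together with a handful of structural reductions that keep the search feasible. I would start from two classical facts about hypohamiltonian graphs: each is $3$-connected and has minimum degree at least $3$. Indeed, if $G$ is hypohamiltonian then $G-v$ is Hamiltonian, hence $2$-connected, for every $v$, which rules out all cutsets of size at most $2$; and a degree-$2$ vertex $v$ with neighbour $a$ would leave $v$ of degree $1$ in the Hamiltonian graph $G-a$, a contradiction. Consequently any graph witnessing the contrary of the statement would be a planar, triangle-free (equivalently girth $\ge 4$), $3$-connected graph of minimum degree $3$. Since every such graph is in particular a planar hypohamiltonian graph, the general lower bound of \autoref{thm:gz3_2} already guarantees it has at least $23$ vertices, so it suffices to rule out the four orders $n\in\{23,24,25,26\}$.

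To make the enumeration tractable I would first extract as much structure as possible. Euler's formula applied to a planar graph of girth $\ge 4$ yields the edge bound $|E|\le 2n-4$, while minimum degree $3$ gives $|E|\ge 3n/2$; together these confine the edge count to a narrow interval and force all but a bounded number of vertices to be cubic. I would then assemble a list of local forbidden configurations for triangle-free hypohamiltonian graphs, that is, constraints on how cubic vertices and short cycles may be arranged, each proved in the form ``if $G$ is hypohamiltonian then $G$ does not contain configuration $X$'', so that they may be used to prune the generation tree without discarding any genuine candidate.

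The search itself I would run with a planar-graph generator such as \texttt{plantri}, configured to output exactly the $3$-connected planar graphs of girth $\ge 4$ and minimum degree $3$ on $n\le 26$ vertices. For each graph produced I would apply a hypohamiltonicity test: confirm that $G$ is non-Hamiltonian and that $G-v$ is Hamiltonian for every vertex $v$. Since the Hamiltonicity queries dominate the running time, I would implement them with an optimized backtracking cycle search, reusing the large common structure shared by $G$ and the vertex-deleted graphs $G-v$.

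The main obstacle is one of scale and of certification rather than of ideas: the number of candidate graphs on $26$ vertices is very large, and the argument is trustworthy only insofar as the generator is complete and the Hamiltonicity routine is correct. I would guard against this on three fronts, namely relying on the established completeness of the generator for the target planar class, independently re-verifying every reported non-Hamiltonicity, and pushing as many of the structural constraints as possible into the generation stage so that few graphs reach the expensive testing phase. The delicate part is the last one: each pruning lemma must be proved to preserve \emph{all} hypohamiltonian graphs, since a single over-aggressive filter would invalidate the exhaustiveness on which the entire lower bound rests.
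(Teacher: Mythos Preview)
This theorem is not proved in the present paper at all: it is quoted verbatim as \cite[Theorem 3.3]{GoedgebeurZ17} and used only as background for the bounds on $h_4$ and $\overline{\alpha}_{0,4}$. There is therefore no in-paper proof to compare your proposal against.

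For what it is worth, your outline is exactly the strategy of the cited source: Goedgebeur and Zamfirescu establish the bound by an exhaustive \texttt{plantri}-based generation of $3$-connected planar graphs of girth $\ge 4$ and minimum degree $\ge 3$ up to $26$ vertices, followed by a hypohamiltonicity filter, with structural lemmas used to prune the search. One small remark: invoking \autoref{thm:gz3_2} to start at $n=23$ is logically permissible but circular in spirit, since that theorem is obtained by the very same computer search in the same paper; in practice the girth-$4$ enumeration simply runs from small $n$ upward without relying on the girth-$3$ case.
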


J. Goedgebeur and C. T. Zamfirescu \cite{GoedgebeurZ17} defined \(h\) (\(h_g\)) to be the order of the smallest planar hypohamiltonian graph (of girth \(g\)). Combining \autoref{thm:gz3_2} and \autoref{thm:gz3_3} with the result of \cite{JooyandehMOPZ17}, we have \(23\leq h\leq 40\), \(27\leq h_4\leq 40\), and \(h_5=45\) \cite[Corollary 3.4]{GoedgebeurZ17}.

T. Zamfirescu \cite{ZAMFIRESCU1972116} defined \(\overline{C^i_k}\) and \(\overline{P^i_k}\) to be the smallest order for which there is a planar \(k\)-connected graph such that every set of \(i\) vertices is disjoint from some longest cycle and path, respectively. The best bounds known so far were \(\overline{C^1_3}\leq 40\), \(\overline{C^2_3}\leq 2625\), \(\overline{P^1_3}\leq 156\), \(\overline{P^2_3}\leq 10350\), which were found based on a planar hypohamiltonian graph on 40 vertices \cite{JooyandehMOPZ17}, and a cubic planar hypohamiltonian graph on 70 vertices \cite{AW11}.

C. T. Zamfirescu \cite{Zamfirescu15} defined \(\overline{\alpha}_0\) (\(\overline{\alpha}_1\)) to be the order of the smallest planar hypohamiltonian (smallest planar almost hypohamiltonian) graph. To emphasize the girth \(g\), we use \(\overline{\alpha}_{0,g}\) and \(\overline{\alpha}_{1,g}\). By \autoref{thm:gz3_2}, \autoref{thm:gz3_3} and the result of \cite{JooyandehMOPZ17}, we have \(23\leq\overline{\alpha}_0\leq 40\) and \(27\leq\overline{\alpha}_{0,4}\leq 40\). By \cite[Corollary 11]{GoedgebeurZ19}, we have \(\overline{\alpha}_1\geq 22\) and \(\overline{\alpha}_{1,4}\geq 26\). Wiener \cite{Wiener18} showed that there exists a planar almost hypohamiltonian graph of order 31 and girth 4, which is the smallest known planar almost hypohamiltonian graph (of girth 4), so \(\overline{\alpha}_1\leq\overline{\alpha}_{1,4}\leq 31\). No planar almost hypohamiltonian graph of girth 3 is known \cite{GoedgebeurZ19}.

Thomassen \cite{ThomassenC74} showed that every planar hypohamiltonian
graph contains a cubic vertex. C. T. Zamfirescu \cite{Zamfirescu19} proved that (i) every planar hypohamiltonian graph contains at least four cubic vertices, (ii) every planar almost hypohamiltonian graph contains a cubic vertex which is not the exceptional vertex (solving one of his problems in \cite{Zamfirescu15}), and (iii) every hypohamiltonian graph with crossing number 1 contains a cubic vertex. No planar hypohamiltonian graph with fewer than 30 cubic vertices was known \cite{JooyandehMOPZ17,Zamfirescu19}. Thus, the minimum number of cubic vertices contained in planar hypohamiltonian graphs is between 4 and 30.

The smallest known cubic planar hypohamiltonian graphs have 70 vertices and girth 4. The first was found by Araya and Wiener \cite{AW11}, while six more were found by M. Jooyandeh and B. McKay \cite{Plantri54,MathWorld}. The cubic planar hypohamiltonian graphs must have girth at least 4 \cite{CollierS78,GoedgebeurZ17}. McKay \cite{McKay16} showed that cubic planar hypohamiltonian graphs of girth 5 exist, and that the smallest ones are three graphs on 76 vertices. J. Goedgebeur and C. T. Zamfirescu \cite[Theorem 3.5]{GoedgebeurZ17} proved that the smallest cubic planar hypohamiltonian graph has at least 54 and at most 70 vertices.

The paper is organized as follows. In \autoref{sec:2} we introduce the relations on some known planar hypohamiltonian graphs of order 40 and 42. These relations allow us to construct the smaller ones. In \autoref{sec:3} we present planar hypohamiltonian graphs on 34, 37, 38, and 41 vertices. Hitherto it was unknown whether planar hypohamiltonian graphs of these orders exist. We exploit a special subgraph contained in two graphs of Jooyandeh et al. \cite{JooyandehMOPZ17} to construct two planar hypohamiltonian graphs on 34 vertices. We further modify the 34-vertex graphs to obtain six planar hypohamiltonian graphs on 37 vertices, and rediscover Wiener's planar almost hypohamiltonian graph on 31 vertices \cite{Wiener18}. No planar hypohamiltonian graph with fewer than 30 cubic vertices was known \cite{Zamfirescu19}. The 34-vertex graphs have 26 cubic vertices, narrowing the gap between 4 and 30 \cite{Zamfirescu19}. The smallest known hypohamiltonian graph with crossing number 1 is of order 36 due to Wiener \cite{Wiener18}. We reduce the best-known bound of 36 to 34 by the hypohamiltonian graphs with crossing number 1 obtained by adding an edge to the 34-vertex graphs. By applying an operation introduced by Thomassen \cite{THOMASSEN198136} to the 37-vertex graphs, several planar hypohamiltonian graphs on 41 vertices are obtained, by which we settle the open question whether there exists a planar hypohamiltonian graph on 41 vertices \cite{JooyandehMOPZ17}. In \cite{JooyandehMOPZ17}, all the 25 planar hypohamiltonian graphs on 40 vertices have no nontrivial automorphisms. We present six planar hypohamiltonian graphs on 40 vertices with nontrivial automorphisms.

Our graphs make some progress on Holton's question regarding the smallest natural number \(n_0\) such that there exists a planar hypohamiltonian graph of order \(n\) for every \(n\geq n_0\) \cite[Problem 1.(b)]{Z7Problems}. The best-known bound for \(n_0\) was 42 \cite[Theorem 4.3]{JooyandehMOPZ17}. By showing the existence of planar hypohamiltonian graphs on 41 vertices, we reduce \(n_0\) to 40.

All of our graphs in this paper are constructed by hand. We use computer to check that they are hypohamiltonian and pairwise non-isomorphic. We did not exhaustively try all possible ways for constructing small planar hypohamiltonian graphs. It is possible that other new constructions yield even more graphs.

\section{Equivalence relations and order relations}\label{sec:2}

Planar hypohamiltonian graphs \(G_1,\ldots,G_k\) are said to be \emph{H-equivalent} if each of them can be obtained from a fixed graph \(H\) by adding vertices and edges such that for every \(i,j=1,\ldots,k\), \(|V(G_i)-V(H)|=|V(G_j)-V(H)|\) and \(|E(G_i)-E(H)|=|E(G_j)-E(H)|\), denoted by \(G_i=_H G_j\). Similarly, \(G\) is said to be \emph{H-strictly bigger than} \(G'\) if \(|V(G)-V(H)|>|V(G')-V(H)|\) and \(|E(G)-E(H)|>|E(G')-E(H)|\), denoted by \(G>_H G'\).

\autoref{fig:3ph42}, \autoref{fig:4ph42}, and \autoref{fig:2ph40} are examples of such equivalence relations. These graphs can be found in the House of Graphs \cite{HoG2} by searching the corresponding House-of-Graphs ID (HoG ID), such as the Wiener-Araya Graph \cite{WienerA11} with HoG ID 1431, which we will also call Graph 1431. In the figures, the number on the lower right of each graph is the corresponding HoG ID. In \autoref{sec:3} we will use these relations to construct two planar hypohamiltonian graphs on 34 vertices, as shown in \autoref{fig:2ph34}.

\nextLine

\begin{figure}[H]
\centering
\captionsetup{width=.8\linewidth}
\includegraphics[scale=1.0]{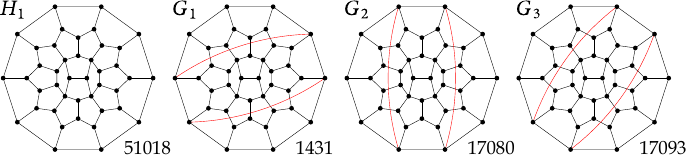}
\caption{\(G_1\), \(G_2\), and \(G_3\) are planar hypohamiltonian graphs of order 42. We have \(G_1=_{H_1} G_2=_{H_1} G_3\).}\label{fig:3ph42}
\end{figure}

\begin{figure}[H]
\centering
\captionsetup{width=.8\linewidth}
\includegraphics{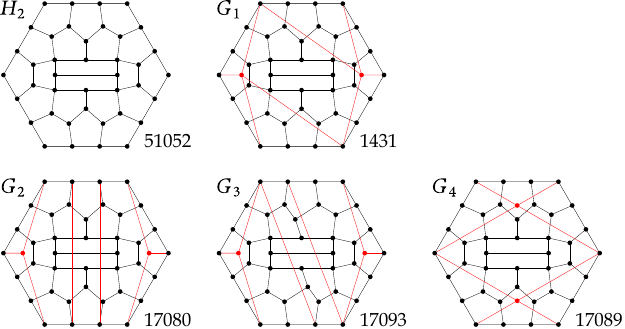}
\caption{\(G_1\), \(G_2\), \(G_3\) and \(G_4\) are planar hypohamiltonian graphs of order 42. We have \(G_1=_{H_2} G_2=_{H_2} G_3=_{H_2} G_4\).}\label{fig:4ph42}
\end{figure}

\begin{figure}[H]
\centering
\captionsetup{width=.8\linewidth}
\includegraphics{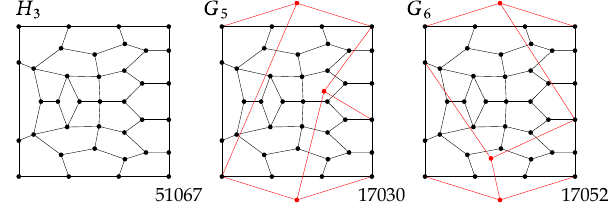}
\caption{\(G_5\) and \(G_6\) are planar hypohamiltonian graphs of order 40 \cite{JooyandehMOPZ17}. We have \(G_5=_{H_3} G_6\).}\label{fig:2ph40}
\end{figure}

\section{Results}\label{sec:3}

To construct planar hypohamiltonian graphs on fewer than 40 vertices, our approach is to modify \(H_3\) to the graph in \autoref{fig:h4}.

\begin{figure}[H]
\centering
\captionsetup{width=.6\linewidth}
\includegraphics[scale=1.4]{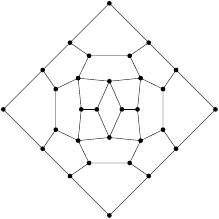}
\caption{Graph 51085, denoted by \(H_4\).}\label{fig:h4}
\end{figure}

\begin{figure}[H]
\centering
\captionsetup{width=.9\linewidth}
\includegraphics[scale=1.4]{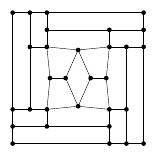}
\caption{An equivalent representation of \(H_4\). This will be used when constructing two planar hypohamiltonian graphs of order 40 with nontrivial automorphisms, Graph 51101 and 51102, as shown in \autoref{fig:6ph40}.}\label{fig:h4e}
\end{figure}

\begin{figure}[H]
\centering
\captionsetup{width=.75\linewidth}
\includegraphics[scale=1.4]{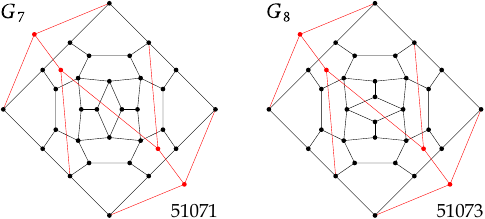}
\caption{Two planar hypohamiltonian graphs of order 34. We have \(G_7=_{H_4} G_8\).}\label{fig:2ph34}
\end{figure}

C. T. Zamfirescu \cite[Problem 1.(a)]{Z7Problems} asked, ``Do planar hypohamiltonian graphs on less than 40 vertices exist?" By the existence of the graphs in \autoref{fig:2ph34}, we settle the problem. No planar hypohamiltonian graph with fewer than 30 cubic vertices was known \cite{JooyandehMOPZ17,Zamfirescu19}. We improve upon the result by the graphs in \autoref{fig:2ph34}, which have 26 cubic vertices.

We tried to add vertices and edges to \(H_4\) by hand in as many ways as possible. Not only can \(H_4\) be used to construct \(G_7\) and \(G_8\), but also two planar hypohamiltonian graphs of order 37 shown in \autoref{fig:6ph37-1}.

\begin{figure}[H]
\centering
\captionsetup{width=.75\linewidth}
\includegraphics[scale=1.4]{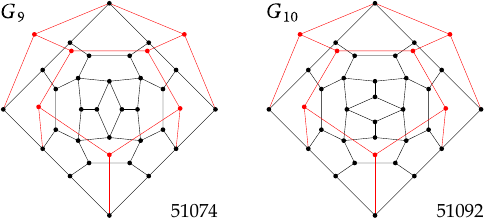}
\caption{Two planar hypohamiltonian graphs of order 37. We have \(G_9=G_{10}>_{H_4} G_7=_{H_4} G_8\). The graphs have 28 cubic vertices.}\label{fig:6ph37-1}
\end{figure}

We rediscover Wiener's planar almost hypohamiltonian graph of order 31 shown in \autoref{fig:w31}, which is the smallest known planar almost hypohamiltonian graph of girth 4. No planar almost hypohamiltonian graph of girth 3 is known \cite{GoedgebeurZ19}.

\begin{figure}[H]
\centering
\captionsetup{width=.9\linewidth}
\includegraphics[scale=1.4]{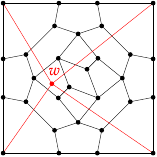}
\caption{Wiener's planar almost hypohamiltonian graph of order 31. The exceptional vertex is the red vertex \(w\). The HoG ID is 51072.}\label{fig:w31}
\end{figure}

We tried by hand to modify \(H_4\), and found four more planar hypohamiltonian graphs on 37 vertices shown in \autoref{fig:6ph37-2} and \autoref{fig:6ph37-3}.

\begin{figure}[H]
\centering
\captionsetup{width=.75\linewidth}
\includegraphics[scale=1.4]{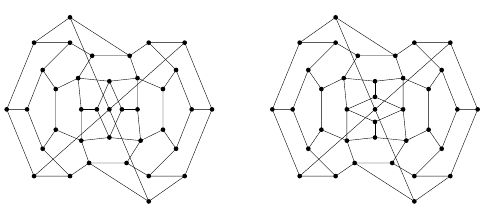}
\caption{(Left) Graph 51093. (Right) Graph 51094.}\label{fig:6ph37-2}
\end{figure}

\begin{figure}[H]
\centering
\captionsetup{width=.7\linewidth}
\includegraphics[scale=1.4]{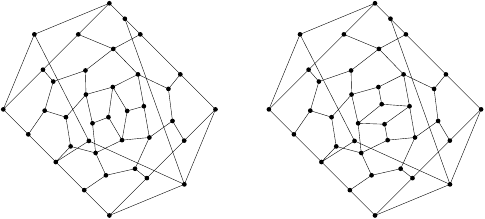}
\caption{(Left) Graph 51095. (Right) Graph 51096. These two graphs have no nontrivial automorphisms.}\label{fig:6ph37-3}
\end{figure}

\begin{theorem}[Grinberg's Theorem {\cite[Theorem 7.3.5]{West2nd}}]
Let \(G\) be a simple plane graph. An \(i\)-face is a face of \(G\) which has size \(i\). If \(G\) has a Hamiltonian cycle \(C\) and \(f_i\) (\(f'_i\)) is the number of \(i\)-faces of \(G\) inside (outside) of \(C\), we have \[\sum_{i\geq 3}(i-2)(f_i-f'_i)=0.\]
\end{theorem}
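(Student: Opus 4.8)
The plan is to work on the interior and exterior of the Hamiltonian cycle \(C\) separately and then combine the two resulting identities. Since \(C\) is Hamiltonian it passes through every vertex, so every edge of \(G\) not lying on \(C\) is a chord whose two endpoints both lie on \(C\), and each such chord is drawn either strictly inside or strictly outside \(C\). Writing \(n=|V(G)|\) for the number of edges on \(C\) and \(d\) for the number of interior chords, I would first establish that the interior region is subdivided into exactly \(d+1\) faces, that is, \(\sum_{i\geq 3} f_i = d+1\). The bounded region enclosed by the \(n\)-cycle is a single face before any chord is inserted, and because the embedding is planar the interior chords are pairwise non-crossing, so each chord splits one existing interior face into two and raises the face count by exactly one; equivalently this follows from Euler's formula applied to \(C\) together with its interior chords.

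Next I would count edge--face incidences among the interior faces in two ways. Summing the face sizes gives \(\sum_{i\geq 3} i\,f_i\). On the other side, each edge of \(C\) bounds exactly one interior face (its other side is the exterior), contributing \(n\), while each interior chord separates two interior faces, contributing \(2d\); hence \(\sum_{i\geq 3} i\,f_i = n+2d\). Eliminating \(d\) via \(d = \bigl(\sum_{i\geq 3} f_i\bigr)-1\) and rearranging then yields the single-region identity \(\sum_{i\geq 3}(i-2)f_i = n-2\).

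The exterior region is treated by an identical argument, with ``inside'' replaced by ``outside'' and \(d\) replaced by the number of exterior chords, giving \(\sum_{i\geq 3}(i-2)f'_i = n-2\). Subtracting the two identities cancels the common right-hand side \(n-2\) and leaves \(\sum_{i\geq 3}(i-2)(f_i - f'_i)=0\), which is the desired conclusion. I expect the only genuinely delicate point to be the face-count claim \(\sum_{i\geq 3} f_i = d+1\): it relies on planarity to keep the chords non-crossing and on Hamiltonicity to guarantee that no vertex lies off \(C\), so that no interior face can be bounded by vertices or edges detached from the cycle. The double counting and the final subtraction are then routine bookkeeping.
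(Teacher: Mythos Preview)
Your argument is correct and is the standard proof of Grinberg's Theorem: split the chords into interior and exterior, use the fact that $d$ non-crossing chords inside a disk produce $d+1$ faces, double-count edge--face incidences to get $\sum_{i\ge 3}(i-2)f_i = n-2$ on each side, and subtract. The one point worth stating slightly more carefully is the exterior case: the unbounded outer face is one of the $f'_i$, and the claim that the exterior is cut into $d'+1$ faces by $d'$ chords is cleanest if you pass to the sphere (so the exterior of $C$ is also a disk) or simply invoke Euler's formula for the subgraph $C$ plus exterior chords. This is not a gap, just a place where ``identical argument'' hides a small topological remark.

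As for comparison with the paper: the paper does not prove Grinberg's Theorem at all. It is stated with a citation to West's textbook and used as a black box in the proof of Theorem~3.2. Your proof is essentially the one found in that reference, so there is nothing substantive to contrast.
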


\begin{theorem}\label{thm:phg}
The graphs shown in \autoref{fig:2ph34}, \autoref{fig:6ph37-1}, \autoref{fig:6ph37-2}, and \autoref{fig:6ph37-3} are planar and hypohamiltonian.
\end{theorem}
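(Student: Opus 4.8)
The plan is to verify two independent claims for each graph in the four figures, namely planarity and hypohamiltonicity, and to organize the argument so that the bulk of the work is a finite case analysis handled uniformly across all the graphs.

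Planarity is the easy half. Each graph is presented in the figures by an explicit plane drawing, so I would simply observe that the displayed embeddings exhibit no edge crossings; this certifies planarity directly from the figures without further computation. I would state this observation once and apply it to every graph in \autoref{fig:2ph34}, \autoref{fig:6ph37-1}, \autoref{fig:6ph37-2}, and \autoref{fig:6ph37-3}.

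For hypohamiltonicity I must establish two things for each graph $G$: that $G$ itself has no Hamiltonian cycle, and that $G-v$ is Hamiltonian for every vertex $v$. The non-Hamiltonicity is where I would invoke Grinberg's Theorem. The strategy is to read off the face sizes from the plane embedding, partition the faces of any hypothetical Hamiltonian cycle into those inside and outside, and show that the Grinberg relation $\sum_{i\geq 3}(i-2)(f_i-f'_i)=0$ cannot be satisfied given the multiset of face sizes—typically by a congruence argument modulo a small integer (for instance, if most faces have size congruent to a fixed residue and only a controlled number of faces escape that residue, the weighted difference is forced into a nonzero residue class). I would carry out the modular bookkeeping once in detail for one representative graph, then indicate that the remaining graphs submit to the identical congruence obstruction, since they share the same face-size profile by construction from $H_4$.

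The restorability of a Hamiltonian cycle in each $G-v$ is the main obstacle, since a priori it requires exhibiting a Hamiltonian cycle for every choice of deleted vertex, i.e.\ $34$ (respectively $37$) separate cycles per graph. To keep this finite task tractable by hand, I would exploit the automorphism groups: vertices in the same orbit need only be checked once, which collapses the number of essentially distinct deletions substantially for the symmetric graphs. For the remaining representatives I would list an explicit Hamiltonian cycle of $G-v$ as a vertex sequence. Because the paper states these graphs were constructed and verified with computer assistance, I would present the hand argument as a certificate-checking procedure: the Hamiltonian cycles are the certificates, and verifying that a given vertex sequence is a cycle visiting every vertex of $G-v$ exactly once is a routine, mechanical check. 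The genuine difficulty is purely bookkeeping—ensuring completeness of the orbit-by-orbit case list so that no deleted vertex is overlooked—rather than any single hard combinatorial insight.
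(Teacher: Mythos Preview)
Your overall structure matches the paper's, and the planarity and vertex-deletion halves are fine. But there is a genuine gap in your non-Hamiltonicity plan. You anticipate that the Grinberg congruence alone will force the weighted sum into a nonzero residue class and hence rule out a Hamiltonian cycle outright. For these graphs that does not happen. Each graph has exactly five $4$-faces and the remaining faces are $5$-faces, so modulo $3$ the Grinberg relation reduces to $2(f_4-f'_4)\equiv 0\pmod 3$, which \emph{is} satisfiable: it forces $\{f_4,f'_4\}=\{4,1\}$, nothing more. The multiset of face sizes therefore does not by itself obstruct a Hamiltonian cycle.

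What the paper does, and what your plan is missing, is a second, structural step after the congruence has narrowed the options. One must argue that no single $4$-face can sit alone opposite the other four. The key geometric fact is that the central $4$-face $q_c$ of $H_4$ shares an edge with each of the other four $4$-faces. This immediately excludes $q_c$ from being the lone face (all four of its edges would have to lie on the cycle), and a short degree argument at a cubic vertex of $q_c$ then excludes each of the remaining four $4$-faces as well. You need to insert this adjacency-of-quadrilaterals argument between the modular reduction and the conclusion; without it the non-Hamiltonicity claim is unproven.
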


\begin{proof}
It is not hard to check that these graphs are planar. We follow the proof of \cite[Theorem 4.1]{JooyandehMOPZ17} to show that \(G_7\) is non-Hamiltonian. For the other graphs, the proof is the same. 

Assume to the contrary that \(G_7\) contains a Hamiltonian cycle \(C\), which must then satisfy Grinberg's Theorem. The graph \(G_7\) contains five 4-faces and 18 5-faces. Then \[\mmod{\sum_{i\geq 3}(i-2)(f_i-f'_i)\equiv 2(f_4-f'_4)}{0}{3},\] where \(f_4+f'_4=5\). So \(f'_4=1\) and \(f_4=4\), or \(f'_4=4\) and \(f_4=1\). Let \(Q\) be the 4-face on a different side from the other four.

Notice that an edge belongs to a Hamiltonian cycle if and only if the two faces it belongs to are on different sides of the cycle. Let the 4-face at the center of \(H_4\) be \(q_c\). Since \(q_c\) has edges in common with all other 4-faces and its edges cannot all be in a Hamiltonian cycle, \(q_c\) cannot be \(Q\).

If \(Q\) is any of the other 4-faces, denote the edge belonging to \(Q\) and \(q_c\) by \(e=\{a,b\}\) with \(\deg(a)=4\) and \(\deg(b)=3\).
Denote the other two vertices of \(q_c\) by \(c\) and \(d\) with \(\deg(c)=4\) and \(\deg(d)=3\). Since the vertex \(a\) belongs to the edge \(e\), the edge \(\{d,a\}\) is not a part of \(C\). The edge \(\{d,c\}\) is not a part of \(C\), either. It is because the two 4-faces that \(\{d,c\}\) belongs to are on the same side of \(C\). Thus, \(G_7\) is non-Hamiltonian.

Finally, for each vertex it is routine to exhibit a cycle of length 33 (\autoref{fig:2ph34}) or 36 (\autoref{fig:6ph37-1}, \autoref{fig:6ph37-2}, and \autoref{fig:6ph37-3}) that
avoids it.
\end{proof}

The smallest known hypohamiltonian graph with crossing number 1 due to Wiener \cite{Wiener18} has 36 vertices. We reduce the bound of 36 to 34 by the hypohamiltonian graphs with crossing number 1 obtained by adding an edge to the graphs in \autoref{fig:2ph34}. We present six such graphs shown in \autoref{fig:6h34c1} derived from \(G_7\) in \autoref{fig:2ph34} by adding an edge (shown in red) to it.

\begin{figure}[H]
\centering
\captionsetup{width=.8\linewidth}
\includegraphics[scale=1]{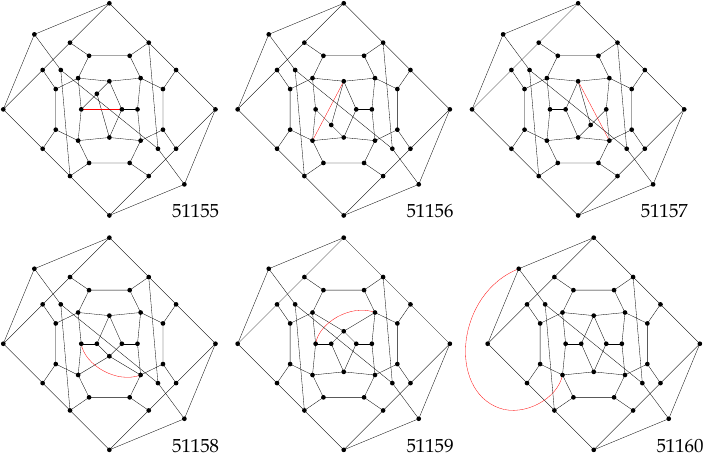}
\caption{Six hypohamiltonian graphs with crossing number 1 derived from \(G_7\) in \autoref{fig:2ph34}.}\label{fig:6h34c1}
\end{figure}

We use computer to check that the graphs in \autoref{fig:6h34c1} are hypohamiltonian and pairwise non-isomorphic. It is not hard to check by hand that each red edge must cross over at least one black edge, so the crossing number is 1.

Whether there exists a planar hypohamiltonian graph on 41 vertices was
an open question \cite{JooyandehMOPZ17}. The Holton's question is what is the smallest natural number \(n_0\) such that there exists a planar hypohamiltonian graph of order \(n\) for every \(n\geq n_0\) \cite[Problem 1.(b)]{Z7Problems}. Motivated by Holton's question, we now need an operation introduced by Thomassen \cite{THOMASSEN198136} with which he showed that there exist infinitely many planar cubic hypohamiltonian graphs. Let \(G\) be a graph containing a 4-cycle \(v_1v_2v_3v_4=C\). We denote by \(\mathrm{Th}(G,C)\) the graph obtained from \(G\) by deleting the edges \(v_1v_2\), \(v_3v_4\) and adding a new 4-cycle \(v'_1v'_2v'_3v'_4\) and the edges \(v_iv'_i\), \(1\leq i\leq 4\). Araya and Wiener \cite{WienerA11} note that a result in \cite{THOMASSEN198136} generalizes as follows, with the same proof.

\begin{lemma}[{\cite[Lemma 4.4]{WienerA11}}]\label{lmm:aw} Let \(G\) be a planar hypohamiltonian graph having a \(4\)-cycle \(v_1v_2v_3v_4=C\) and suppose that the degrees of the vertices \(v_1,v_2,v_3,v_4\) are \(3\). Then \(Th(G,C)\) is also a planar hypohamiltonian graph.
\end{lemma}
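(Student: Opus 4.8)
The plan is to set \(G'=\mathrm{Th}(G,C)\) and fix notation: since each \(v_i\) is cubic and two of its neighbours lie on \(C\), it has a unique neighbour \(x_i\notin\{v_1,v_2,v_3,v_4\}\). In \(G'\) the edges \(v_1v_2,v_3v_4\) are deleted, the edges \(v_2v_3,v_4v_1\) survive, and the new vertices \(v_1',v_2',v_3',v_4'\) each have degree \(3\) (two along the new \(4\)-cycle, one rung \(v_iv_i'\)); in particular every \(v_i\) remains cubic and \(G'\) is simple. Planarity is immediate: draw the new \(4\)-cycle and the four rungs inside the face of \(G\) bounded by \(C\), which is possible because the operation is local to that face.

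For non-Hamiltonicity I would argue by contradiction. Suppose \(G'\) has a Hamiltonian cycle \(H\) and consider the cut separating \(\{v_1',v_2',v_3',v_4'\}\) from the rest, whose only crossing edges are the four rungs. Since \(H\) crosses any vertex set an even number of times, the number of rungs used is \(0\), \(2\), or \(4\); the value \(0\) is impossible, since then the new \(4\)-cycle would be a separate component of \(H\). If exactly two rungs are used they must sit at \emph{adjacent} primed vertices, because \(C_4\) has no Hamiltonian path between opposite corners; in both the two-rung and four-rung cases the part of \(H\) inside the gadget is a union of paths \(v_i-\dots-v_j\) through primed vertices whose endpoints \(v_i,v_j\) are joined by an edge of the original cycle \(C\) in \(G\). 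Contracting each such path to that edge turns \(H\) into a Hamiltonian cycle of \(G\), contradicting that \(G\) is non-Hamiltonian. The only routine part here is the finite check that each admissible rung pattern contracts to a genuine \(C\)-edge.

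For hypohamiltonicity I would lift Hamiltonian cycles of \(G-w\) (which exist since \(G\) is hypohamiltonian) to Hamiltonian cycles of \(G'-u\), in three cases. If \(u\) is a vertex off the cycle, take a Hamiltonian cycle \(D\) of \(G-u\); cubicity of the \(v_i\) forces the set \(S\) of \(C\)-edges used by \(D\) to be either a perfect matching of \(C\) or a \(3\)-edge subset, and in each case I would give an explicit reroute through the rungs and the new \(4\)-cycle that visits all four primed vertices while leaving the rest of \(D\) untouched (for instance, replacing \(v_1v_2\) by \(v_1v_1'v_2'v_2\) and \(v_3v_4\) by \(v_3v_3'v_4'v_4\) when \(S=\{v_1v_2,v_3v_4\}\)). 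When \(u=v_i\) is a cycle vertex, its deletion drops two of the surviving \(v_j\) to degree \(2\), so the Hamiltonian cycle of \(G-v_i\) is forced to contain a prescribed path through the three remaining cycle vertices, which again lifts by a fixed reroute absorbing all four primed vertices.

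The delicate case, and the one I expect to be the main obstacle, is \(u=v_i'\). Here no unprimed vertex is removed, so a Hamiltonian cycle of \(G'-v_i'\) must cover all of \(V(G)\); yet \(G\) is itself non-Hamiltonian, so such a cycle cannot be produced by lifting a Hamiltonian cycle of \(G\). The key observation is that deleting \(v_i'\) lowers three gadget vertices to degree \(2\), which \emph{forces} the entire gadget into a single path \(P\) between \(x_i\) and \(x_{i+2}\) (indices mod \(4\)), the external neighbours of an opposite pair of cycle vertices, passing through all of \(v_1,v_2,v_3,v_4\) and the three surviving primed vertices. It then suffices to find a Hamiltonian path of \(G-\{v_1,v_2,v_3,v_4\}\) between \(x_i\) and \(x_{i+2}\), and this is precisely what a Hamiltonian cycle of \(G-v_{i+1}\) supplies once its own forced interior path is removed. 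Concatenating \(P\) with this path yields the required Hamiltonian cycle of \(G'-v_i'\); the only point needing care is that \(x_i\neq x_{i+2}\), which holds because \(G\), being hypohamiltonian, has at least \(10\) vertices.
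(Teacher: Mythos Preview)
The paper does not give its own proof of this lemma: it is quoted verbatim from \cite[Lemma~4.4]{WienerA11}, with the remark that Araya and Wiener observed it generalizes a result of Thomassen \cite{THOMASSEN198136} ``with the same proof.'' So there is no in-paper argument to compare against, and your proposal is in fact the standard proof of the cited result.

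Your argument is correct in all three parts. The non-Hamiltonicity reduction (contracting gadget paths to edges of \(C\)) and the three-case lift for hypohamiltonicity are exactly how the proof goes; in particular your handling of the delicate case \(u=v_i'\) is right: the forced path in \(G'-v_i'\) runs from \(x_i\) to \(x_{i+2}\), and a Hamiltonian cycle of \(G-v_{i+1}\) (or equally \(G-v_{i-1}\)) is forced through \(x_i\!-\!v_i\!-\!v_{i\mp1}\!-\!v_{i+2}\!-\!x_{i+2}\), supplying the required Hamiltonian \(x_i\)--\(x_{i+2}\) path in \(G-\{v_1,v_2,v_3,v_4\}\).

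One small point worth making explicit in your planarity step: you assume \(C\) bounds a face, which is not part of the hypothesis. It does follow, however, from the fact that hypohamiltonian graphs are \(3\)-connected together with the cubicity of the \(v_i\): each \(v_i\) has a single neighbour \(x_i\) off \(C\), and if the \(x_i\) did not all lie on the same side of \(C\) in a plane embedding, then removing at most two of the \(v_i\) would separate the two sides, contradicting \(3\)-connectivity. With that observation your local redrawing inside the face is justified.
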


The assertion of \hyperref[lmm:aw]{Lemma 3.3} holds if one uses the condition that at least one of \(v_1,v_2,v_3,v_4\) is cubic in \(G\). It also holds if the edges \(v_1v_2\) or \(v_3v_4\) (prossibly both) lie is \(G\) \cite[Theorem 5]{Zamfirescu19}.

Jooyandeh et al. \cite[Theorem 4.3]{JooyandehMOPZ17} used the operation Th to show that planar hypohamiltonian graphs of order \(n\) for every \(n\geq 42\), which follows \(n_0\leq 42\). By applying the operation Th to the 37-vertex graphs in \autoref{fig:6ph37-1}, \autoref{fig:6ph37-2}, and \autoref{fig:6ph37-3}, we obtain a number of planar hypohamiltonian graphs on 41 vertices, and thus settle the open question of Jooyandeh et al. \cite{JooyandehMOPZ17}. \autoref{fig:2ph41} shows two such 41-vertex graphs obtained by applying the operation Th to the 4-cycle at the center of \(H_4\) without deleting the edges \(v_1v_2\) and \(v_3v_4\). The other graphs can be found in the House of Graphs \cite{HoG2} by searching for
the text string ``planar hypohamiltonian graph".

\begin{figure}[H]
\centering
\captionsetup{width=.9\linewidth}
\includegraphics[scale=1]{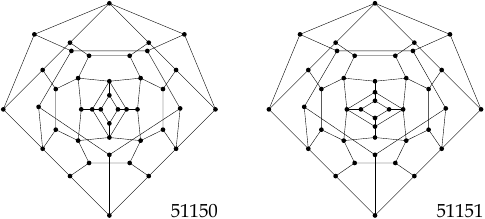}
\caption{Two planar hypohamiltonian graphs on 41 vertices, obtained by applying the operation Th to the graphs in \autoref{fig:6ph37-1}.}\label{fig:2ph41}
\end{figure}

By the graphs in \autoref{fig:2ph41}, the result of Jooyandeh et al. \cite[Theorem 4.3]{JooyandehMOPZ17} is improved further in the next theorem.

\begin{theorem}\label{thm:all40}
There exist planar hypohamiltonian graphs of order \(n\) for every \(n\geq 40\).
\end{theorem}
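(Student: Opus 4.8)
The plan is to obtain the full range $n\ge 40$ by combining the existence result of Jooyandeh et al.\ with the two small orders produced in this paper. By \cite[Theorem 4.3]{JooyandehMOPZ17}, planar hypohamiltonian graphs of order $n$ exist for every $n\ge 42$, so it suffices to exhibit planar hypohamiltonian graphs of the two remaining orders $40$ and $41$. Order $40$ is supplied by the graphs $G_5,G_6$ of \autoref{fig:2ph40} (due to Jooyandeh et al.), and order $41$ by the graphs of \autoref{fig:2ph41}. Taking the union $\{40\}\cup\{41\}\cup\{n:n\ge 42\}=\{n:n\ge 40\}$ then gives the claim.

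First I would confirm that the two graphs of \autoref{fig:2ph41} are genuinely planar hypohamiltonian of order $41$. Each is built from a $37$-vertex graph of \autoref{fig:6ph37-1} by the operation $\mathrm{Th}$ applied to the central $4$-cycle $C$ of $H_4$; since $\mathrm{Th}$ adds exactly four new vertices, the order becomes $37+4=41$, and \autoref{lmm:aw} (in the generalized form quoted after its statement, for which it suffices that at least one vertex of $C$ is cubic, or that the edges $v_1v_2,v_3v_4$ lie in $G$) guarantees that planarity and hypohamiltonicity are preserved. With orders $40$ and $41$ in hand and $n\ge 42$ already available from \cite{JooyandehMOPZ17}, the counting step is immediate.

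The main work, and the only genuine obstacle, is thus not the counting but the verification underlying the order-$41$ construction: one must check that the central $4$-cycle of $H_4$ meets the hypotheses of the generalized \autoref{lmm:aw}, so that $\mathrm{Th}$ indeed preserves hypohamiltonicity. I note in passing that a fully self-contained argument---seeding four graphs of consecutive orders $40,41,42,43$ and then sweeping each residue class modulo $4$ by repeated application of $\mathrm{Th}$, which raises the order by $4$---cannot be carried out from our graphs alone, since none of the new orders $34,37,38,41$ nor the available orders $40,42$ is congruent to $3$ modulo $4$, so no $43$-vertex seed is at hand. This is precisely why invoking \cite[Theorem 4.3]{JooyandehMOPZ17} for all $n\ge 42$ (which already covers $43,47,51,\dots$) and merely appending orders $40$ and $41$ is the cleanest route to the theorem.
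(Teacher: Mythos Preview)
Your proposal is correct and follows exactly the route the paper takes: invoke \cite[Theorem~4.3]{JooyandehMOPZ17} for all $n\ge 42$, supply order $40$ from the graphs of Jooyandeh et al., and supply order $41$ via the graphs in \autoref{fig:2ph41} obtained by applying $\mathrm{Th}$ to the $37$-vertex graphs. Your aside that no $43$-vertex seed is available is slightly off---the Appendix exhibits such graphs---but this does not affect the argument.
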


By applying the operation Th to the graphs in \autoref{fig:2ph34}, we have several planar hypohamiltonian graphs on 38 vertices, two of which are shown in \autoref{fig:2ph38}. Whether there exists planar hypohamiltonian graphs on 35, 36, 39 vertices remains
an open question.

\begin{figure}[H]
\centering
\captionsetup{width=.9\linewidth}
\includegraphics[scale=1]{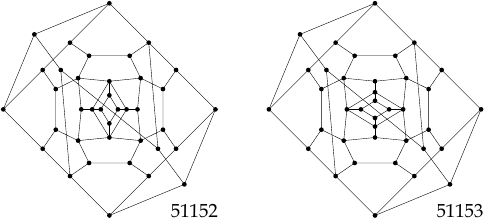}
\caption{Two planar hypohamiltonian graphs on 38 vertices, obtained by applying the operation Th to the graphs in \autoref{fig:2ph34}.}\label{fig:2ph38}
\end{figure}

The 25 planar hypohamiltonian graphs on 40 vertices in \cite{JooyandehMOPZ17} have no nontrivial automorphisms, as mentioned in \cite[Conclusions]{JooyandehMOPZ17}. In this paper, we present six planar hypohamiltonian graphs on 40 vertices with nontrivial automorphisms shown in \autoref{fig:6ph40}. Additionally, the planar hypohamiltonian graphs in \autoref{fig:2ph34}, \autoref{fig:6ph37-1}, \autoref{fig:6ph37-2}, \autoref{fig:2ph41}, and \autoref{fig:2ph38} have nontrivial automorphisms with the corresponding orders 34, 37, 41, and 38. In the \hyperref[appdx]{Appendix}, we also list some planar hypohamiltonian graphs on 43, 46, and 49 vertices with nontrivial automorphisms.

\begin{figure}
\centering
\captionsetup{width=.9\linewidth}
\includegraphics[scale=1]{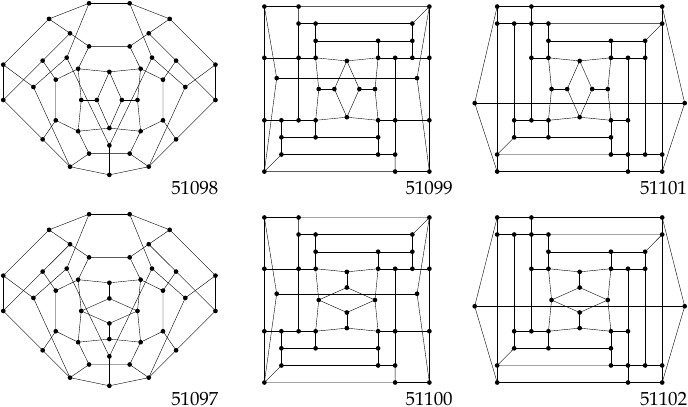}
\caption{Six planar hypohamiltonian graphs on 40 vertices with nontrivial automorphisms. The two graphs on the rightmost contain \(H_4\).}\label{fig:6ph40}
\end{figure}

In 2011, Araya and Wiener \cite[Corollary 3.1; Theorem 4.9]{WienerA11} proved that there exist planar hypotraceable graphs on \(162+4k\) vertices for every \(k\geq 0\), and on \(n\) vertices for every \(n\geq 180\). In 2017, Jooyandeh et al. \cite[Theorem 4.5]{JooyandehMOPZ17} proved that there exist planar hypotraceable graphs on 154 vertices, and on \(n\) vertices for every \(n\geq 156\). In 2018, Wiener \cite{Wiener18} constructed a planar hypotraceable graph of order 138, improving the bound of 154 in \cite{JooyandehMOPZ17}. To improve the best-known bound of 138, we make use of the following theorem, which is a slight modification of \cite[Lemma 3.1]{Thomassen74a}.

\begin{theorem}[{\cite[Theorem 4.4]{JooyandehMOPZ17}}]\label{thm:j4_4}
Let \(G_i=(V_i,E_i),\ 1\leq i\leq 4,\) be four planar hypohamiltonian
graphs. Then there is a planar hypotraceable graph of order \(|V_1|+|V_2|+|V_3|+|V_4|-6\).
\end{theorem}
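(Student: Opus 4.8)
The plan is to realize the theorem through a fragment-gluing construction in the style of Thomassen. The starting observation is that in each planar hypohamiltonian graph \(G_i\) I can pick a cubic vertex \(v_i\) (one exists by Thomassen's result \cite{ThomassenC74}, cited in the introduction) with neighbours \(a_i,b_i,c_i\), and delete it to obtain a \emph{fragment} \(F_i=G_i-v_i\). Because \(G_i\) is drawn in the plane and \(v_i\) is cubic, its three neighbours \(a_i,b_i,c_i\) lie on the boundary of the face vacated by \(v_i\); hence the three attachment vertices of \(F_i\) sit on a common face, which is exactly what is needed to glue the fragments together without introducing crossings. The global graph \(G\) is then assembled from \(F_1,\dots,F_4\) by adding connecting edges between attachment vertices and by identifying or suppressing attachment vertices at the junctions; the four deleted vertices together with these identifications account for the drop of \(6\) in the order, giving \(|V(G)|=|V_1|+|V_2|+|V_3|+|V_4|-6\).

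The heart of the argument is a pair of fragment properties, both read off from the hypohamiltonicity of \(G_i\). First, since \(G_i\) has no Hamiltonian cycle, and such a cycle would have to use two of the three edges at \(v_i\), the fragment \(F_i\) admits \emph{no} Hamiltonian path whose two endpoints lie among \(\{a_i,b_i,c_i\}\); this is the crucial negative property. Second, for every internal vertex \(u\) of \(F_i\) the graph \(G_i-u\) is Hamiltonian, and its Hamiltonian cycle must pass through \(v_i\), so \(F_i-u\) \emph{does} contain a Hamiltonian path between two of the attachment vertices; likewise, deleting an attachment vertex, say \(a_i\), from \(F_i\) leaves a Hamiltonian path between \(b_i\) and \(c_i\), coming from a Hamiltonian cycle of \(G_i-a_i\) through \(v_i\). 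These positive properties are what allow a spanning path to be rerouted once any single vertex has been removed.

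With these properties in hand the proof splits into the two defining conditions of hypotraceability. For \emph{non-traceability} of \(G\), I would show that the connection pattern forces any hypothetical Hamiltonian path of \(G\) to enter and leave each fragment only through its attachment vertices, so that it induces on some \(F_i\) a Hamiltonian path between two of \(a_i,b_i,c_i\), contradicting the negative fragment property. For \emph{vertex-deleted traceability}, I would fix a vertex \(w\) and distinguish whether \(w\) is interior to some fragment or is one of the junction vertices; in either case the positive fragment properties supply the spanning subpaths inside the affected fragment, while the other three fragments are traversed by their attachment-to-attachment Hamiltonian paths, and the connecting edges stitch these subpaths into a single Hamiltonian path of \(G-w\). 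Planarity is inherited throughout because every gluing operation is performed along the common faces carrying the attachment vertices.

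The main obstacle is not any single step but the bookkeeping of the gluing: one must fix a connection pattern for the twelve attachment vertices that simultaneously (i) is realisable in the plane, (ii) forces every traversal of \(G\) to respect the fragment boundaries so that the negative property bites, and (iii) still admits, after deleting any one vertex, a consistent routing of attachment-to-attachment paths across all four fragments. Getting a single pattern to meet all three demands at once, and then checking every deletion case against it, is where the real work lies; the count \(|V_1|+|V_2|+|V_3|+|V_4|-6\) must also be confirmed to agree with the number of deletions and identifications the chosen pattern performs. Since this is exactly the construction of \cite{Thomassen74a} with a minor modification, I would follow that pattern and verify the cases rather than search for a new gluing.
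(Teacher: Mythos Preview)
The paper does not give its own proof of this statement; it is quoted verbatim from \cite{JooyandehMOPZ17} and described there, and here, as ``a slight modification of \cite[Lemma 3.1]{Thomassen74a}.'' Your outline is precisely that construction: delete a cubic vertex from each \(G_i\) (whose existence in the planar case is guaranteed by Thomassen's theorem), use the resulting fragments with their three attachment vertices, and glue along faces; the negative and positive fragment properties you isolate are exactly the lemmas that drive Thomassen's argument. Since you explicitly defer the gluing pattern and the case analysis to \cite{Thomassen74a}, your proposal is in full agreement with what the paper invokes.

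One small remark on your vertex count: the drop of \(6\) in Thomassen's pattern comes not from four vertex deletions plus identifications, but from deleting a cubic vertex in each of the two \emph{end} graphs \(G_1,G_4\) and, in the two \emph{middle} graphs \(G_2,G_3\), deleting a cubic vertex together with one of its neighbours (and then adding the appropriate joining edges). That gives \(2\cdot 1+2\cdot 2=6\) removed vertices with no identifications, matching \(|V_1|+|V_2|+|V_3|+|V_4|-6\). Your sketch leaves this accounting open, so when you carry out the verification you should align it with Thomassen's actual pattern rather than the ``four deletions plus two identifications'' variant you tentatively suggest.
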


Combining \autoref{thm:j4_4} with \autoref{thm:phg} and \autoref{thm:all40}, we obtain the following.

\begin{theorem}\label{thm:pht130}
There exist planar hypotraceable graphs on \(130\) vertices, and on \(n\) vertices for every \(n\geq 154\).
\end{theorem}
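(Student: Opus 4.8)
The plan is to derive both halves of the statement directly from \autoref{thm:j4_4}, which converts any quadruple of planar hypohamiltonian graphs of orders $a_1,a_2,a_3,a_4$ into a planar hypotraceable graph of order $a_1+a_2+a_3+a_4-6$. Since that construction is available as a black box, the entire argument reduces to exhibiting, for each target order, four planar hypohamiltonian graphs whose orders sum to that target plus $6$. The two sources of building blocks are \autoref{thm:phg}, which supplies a planar hypohamiltonian graph on $34$ vertices, and \autoref{thm:all40}, which supplies one of every order $n\geq 40$. I first note that \autoref{thm:j4_4} imposes no distinctness or non-isomorphism condition on the four graphs, so repeated copies of a single graph are permitted.

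For the order-$130$ claim I would take $G_1,G_2,G_3,G_4$ to be four copies of the $34$-vertex graph from \autoref{fig:2ph34}, whose existence is guaranteed by \autoref{thm:phg}. Their orders sum to $4\cdot 34=136$, so \autoref{thm:j4_4} yields a planar hypotraceable graph of order $136-6=130$, as required.

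For the range $n\geq 154$ I would, for each such $n$, choose three planar hypohamiltonian graphs of order $40$ together with one of order $n-114$; all four exist by \autoref{thm:all40}, since $n\geq 154$ forces $n-114\geq 40$. Their orders sum to $3\cdot 40+(n-114)=n+6$, so \autoref{thm:j4_4} produces a planar hypotraceable graph of order $(n+6)-6=n$. Letting $n$ range over all integers $\geq 154$ covers the claimed interval, with the boundary case $n=154$ corresponding to four order-$40$ graphs.

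I do not expect any genuine obstacle here: the substantive work is packaged inside \autoref{thm:j4_4} and \autoref{thm:all40}, and what remains is the elementary bookkeeping of writing $136$ and each $n+6$ (for $n\geq 154$) as a sum of four admissible orders. The one point I would explicitly confirm is that the construction underlying \autoref{thm:j4_4} tolerates isomorphic summands, as is needed for the four $34$-vertex copies and the three $40$-vertex copies. This is indeed so: the construction is the classical one used by Thomassen, whose own smallest hypotraceable graph on $34$ vertices is assembled from four copies of the Petersen graph ($4\cdot 10-6=34$), so no distinctness among the four inputs is required.
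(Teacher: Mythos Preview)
Your proposal is correct and matches the paper's own argument, which is simply the one-line remark that the result follows by combining \autoref{thm:j4_4} with \autoref{thm:phg} and \autoref{thm:all40}; you have merely supplied the explicit arithmetic ($4\cdot 34-6=130$ and $3\cdot 40+(n-114)-6=n$ for $n\geq 154$) that the paper leaves implicit. Your check that \autoref{thm:j4_4} permits isomorphic inputs is apt and correctly justified.
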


Let \(H\) be a cubic graph and \(G\) be a graph containing a cubic vertex \(w\in V(G)\). We say that we \emph{insert} \(G\) into \(H\), if we replace every vertex of \(H\) with \(G-w\) and connect the endpoints of edges in \(H\) to the neighbours of \(w\). We improve upon the previous results \cite{Zamfirescu15,JooyandehMOPZ17,GoedgebeurZ17} as follows. Our arguments follow the same strategy as used in \cite[Corollary 4.7]{JooyandehMOPZ17} and \cite[Theorem 4.1]{ZamfirescuPhD}.

\begin{corollary}
\[27\leq h_4\leq 34,\quad \overline{\alpha}_0\leq 34,\quad \overline{C^1_3}\leq 34,\quad \overline{C^2_3}\leq 2205,\quad \overline{P^1_3}\leq 132,\quad \overline{P^2_3}\leq 8694.\]
\end{corollary}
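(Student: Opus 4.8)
The plan is to establish each of the six bounds by combining the newly constructed 34-vertex planar hypohamiltonian graphs $G_7$ and $G_8$ from \autoref{fig:2ph34} with the insertion operation just defined and with the structural theorems already available. The first bound $27 \le h_4 \le 34$ has its lower part from \autoref{thm:gz3_3}; for the upper part, I would simply verify that at least one of $G_7, G_8$ has girth $4$, so that a planar hypohamiltonian graph of girth $4$ on $34$ vertices exists, giving $h_4 \le 34$. The bound $\overline{\alpha}_0 \le 34$ is immediate from \autoref{thm:phg}: $G_7$ is a planar hypohamiltonian graph on $34$ vertices, and $\overline{\alpha}_0$ is by definition the order of the smallest such graph.

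For $\overline{C^1_3} \le 34$ I would argue that a planar hypohamiltonian graph is automatically $3$-connected (this is standard for hypohamiltonian graphs, as their minimum degree is at least $3$ and the hypohamiltonian property forces $3$-connectivity), and that in any hypohamiltonian graph, for each vertex $v$ the longest cycle avoiding $v$ has length $|V|-1$; since the whole graph has no Hamiltonian cycle, every longest cycle misses at least one vertex, so every single vertex $\{v\}$ is disjoint from some longest cycle. Thus $G_7$ witnesses $\overline{C^1_3} \le 34$. The remaining three bounds $\overline{C^2_3} \le 2205$, $\overline{P^1_3} \le 132$, and $\overline{P^2_3} \le 8694$ are the substantive ones, and here I would follow the insertion strategy of \cite[Corollary 4.7]{JooyandehMOPZ17} and \cite[Theorem 4.1]{ZamfirescuPhD} verbatim, substituting our $34$-vertex graph in place of their $40$-vertex graph. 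The idea is to insert $G_7$ (which has a cubic vertex $w$, so $G_7 - w$ has three vertices of degree $2$ that will be joined to the neighbours of the replaced vertex) into a suitable small cubic graph; each copy contributes $33$ vertices, and counting the vertices of the resulting graph against the number of copies used in \cite{JooyandehMOPZ17} yields the improved numerical bounds. Concretely, the constant $2625$ of \cite{JooyandehMOPZ17} arose from inserting a $40$-vertex graph into a fixed cubic host, and replacing $40$ by $34$ (equivalently $39$ by $33$ per inserted copy) scales the totals down to $2205$, $132$, and $8694$ respectively.

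The main work is to check that the insertion operation preserves exactly the property each parameter measures: for $\overline{C^2_3}$ one must confirm that after inserting copies of $G_7 - w$ into the host cubic graph, the resulting planar $3$-connected graph has the property that every pair of vertices is avoided by some longest cycle, and analogously for the path-versions $\overline{P^1_3}, \overline{P^2_3}$. This is exactly the content of the cited corollaries of \cite{JooyandehMOPZ17} and \cite{ZamfirescuPhD}, so the verification amounts to observing that their proofs use only (i) planarity, (ii) hypohamiltonicity, and (iii) the existence of a cubic vertex in the inserted graph, all of which $G_7$ supplies by \autoref{thm:phg}. I would therefore state that, since $G_7$ satisfies the same hypotheses as the $40$-vertex graph used there but has six fewer vertices, the identical arguments go through and produce the stated numerical improvements by a routine recount of vertices.

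The step I expect to be the main obstacle is pinning down the exact arithmetic of the last three bounds, that is, tracking precisely how many copies of $G_7 - w$ are inserted and into which fixed cubic graphs, so that the constants $2205$, $132$, and $8694$ come out correctly. This is bookkeeping rather than conceptual difficulty, but it requires faithfully reconstructing the host graphs and multiplicities from \cite{JooyandehMOPZ17} and \cite{ZamfirescuPhD} and substituting $|V(G_7)| = 34$ consistently; any off-by-one error in the vertex count per inserted copy propagates into all three constants.
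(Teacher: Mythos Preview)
Your plan is correct and matches the paper's proof essentially line for line; the paper likewise cites \cite[Corollary 4.7]{JooyandehMOPZ17} and \cite[Theorem 4.1]{ZamfirescuPhD} and substitutes the $34$-vertex graph throughout. For the bookkeeping you flagged: the hosts are the Araya--Wiener $70$-vertex cubic planar hypohamiltonian graph $G_0$ for $\overline{C^2_3}$ (giving $33\cdot 70-|E(G_0)|=2310-105=2205$ after contracting the original edges of $G_0$), $K_4$ for $\overline{P^1_3}$ (giving $4\cdot 33=132$), and $G_0$ inserted into $K_4$ for $\overline{P^2_3}$ (a cubic graph $H$ on $276$ vertices and $414$ edges, giving $33\cdot 276-414=8694$ after contracting the edges of $H$).
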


\begin{proof}
The first inequality is obtained from \cite[Theorem 3.3]{GoedgebeurZ17} and the graphs in \autoref{fig:2ph34}, which improves upon the third inequality in \cite[Corollary 3.4]{GoedgebeurZ17}.

The second and third inequalities follow immediately from the graphs in \autoref{fig:2ph34}. To obtain the last three inequalities, let \(G\) be one of the graphs in \autoref{fig:2ph34}.

For the fourth inequality, insert \(G\) into the 70-vertex planar cubic hypohamiltonian graph\textemdash which we will call here \(G_0\)\textemdash constructed by Araya and Wiener in \cite{AW11}. (\(G_0\) is the smallest known planar cubic hypohamiltonian graph.) This means that each vertex of \(G_0\) is replaced by \(G\) minus some vertex of degree 3. We denote the resulting graph by \(G'\). Araya and Wiener proved \cite{AW11} (using a computer) that every pair of edges in \(G_0\) is missed by a longest cycle. Combining this fact with the hypohamiltonicity of \(G\) and \(G_0\), we obtain that in \(G'\)
any pair of vertices is avoided by a longest cycle. This property is not lost if all edges originally belonging to \(G_0\) are contracted. By construction, the order of \(G'\) is \((34-1)\cdot 70=2310\). Since \(|E(G_0)|=105\), after contracting all edges originally belonging to \(G_0\), we obtain \(\overline{C^2_3}\leq 2310-105=2205\).

In order to prove the fifth inequality, insert \(G\) into \(K_4\). We obtain a graph in which every vertex is avoided by a path of maximal length. Thus, we have \(\overline{P^1_3}\leq 4\cdot(34-1)=132\). The upper bound for \(\overline{P^1_3}\) is not improved upon by \autoref{thm:pht130}. The planar hypotraceable graphs on 130 vertices are constructed by \cite[Lemma 3.1; Fig. 2]{Thomassen74a}, which are 2-connected.

For the last inequality, consider the graph \(G_0\) from the third paragraph of this proof and insert \(G_0\) into \(K_4\) to obtain \(H\). Now insert \(G\) into \(H\). Finally, contract all edges which originally belonged to \(H\).
\end{proof}

For the cubic planar case, the smallest known cubic planar hypohamiltonian graphs have 70 vertices and girth 4. The first one was found by Araya and Wiener \cite{AW11}, and the other six were found by Jooyandeh and McKay \cite{Plantri54,MathWorld}. The cubic planar hypohamiltonian graphs must have girth at least 4 \cite{CollierS78,GoedgebeurZ17}. McKay \cite{McKay16} showed that cubic planar hypohamiltonian graphs of girth 5 exist, and that the smallest ones are three graphs on 76 vertices. Therefore the smallest cubic planar hypohamiltonian graph must have girth exactly 4. J. Goedgebeur and C. T. Zamfirescu \cite[Theorem 3.5]{GoedgebeurZ17} proved that the smallest cubic planar hypohamiltonian graph has at least 54 and at most 70 vertices.

\section{Conclusions}\label{sec:4}

Based on the work of \cite{JooyandehMOPZ17}, the two planar hypohamiltonian graphs of order 34 with nontrivial automorphisms in this paper significantly decrease the gap between the lower and upper bounds for the order of the smallest planar hypohamiltonian graph. By the further study of the 34-vertex graphs, we obtain six planar hypohamiltonian graphs of order 37, four of which have nontrivial automorphisms. By applying the operation introduced by Thomassen \cite{THOMASSEN198136} to the 34- and 37-vertex graphs, we obtain several planar hypohamiltonian graphs of order 38 and 41, some of which have nontrivial automorphisms.

By showing the existence of planar hypohamiltonian graphs on 41 vertices, we settle the open question in \cite{JooyandehMOPZ17} whether there exist planar hypohamiltonian graphs on 41 vertices. Our 41-vertex graphs make some progress on Holton's question regarding the smallest
natural number \(n_0\) such that there exists a planar hypohamiltonian graph of order \(n\) for every \(n\geq n_0\) \cite[Problem 1.(b)]{Z7Problems}. Our graphs gives \(n_0\leq 40\), improving upon the result of \cite{JooyandehMOPZ17}. 

Quote the authors of \cite{JooyandehMOPZ17}, ``An exhaustive study of graphs with prescribed automorphisms might lead to the discovery of new, smaller graphs." Their point of view was precise. Throughout the graphs in \autoref{fig:2ph34}, \autoref{fig:6ph37-1}, and \autoref{fig:6ph37-2}, each of them was constructed with some prescribed symmetries. Finally, new and smaller graphs were found. The further study of these graphs leads to six planar hypohamiltonian graphs of order 40 with nontrivial automorphisms.

Whether there exists planar hypohamiltonian graphs on 35, 36, and 39 vertices remains an open question.

Our graphs are based on two graphs of \cite{JooyandehMOPZ17}, Graph 17030 and 17052, shown in \autoref{fig:2ph40}. In future work one could try the other 23 graphs of \cite{JooyandehMOPZ17} to see whether there are new planar hypohamiltonian graphs on fewer than 40 vertices. Additionally, all the graphs in this paper were constructed by hand by adding vertices and edges to the graph in \autoref{fig:h4}, or modifying it. We did not try all possible for constructing small planar hypohamiltonian graphs. An
exhaustive study of our construction might lead to the discovery of new, smaller graphs.

We can hope that the current work inspires further progress in finding the smallest (cubic) planar hypohamiltonian or smallest planar almost hypohamiltonian graphs.

\section*{Acknowledgements}

We thank Jan Goedgebeur and Carol T. Zamfirescu for their helpful comments. We thank Brendan McKay and Eric W. Weisstein for their comments on cubic planar hypohamiltonian graphs, and thank Eric W. Weisstein for the comment on Figure 11.

\section*{Appendix}\label{appdx}
\begin{figure}[H]
\centering
\captionsetup{width=.9\linewidth}
\includegraphics[scale=1]{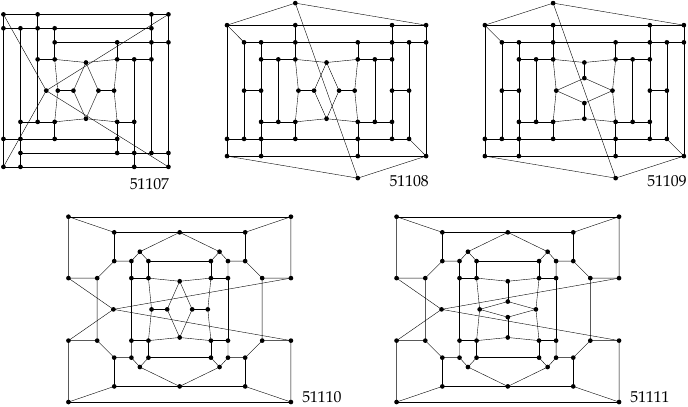}
\caption{Planar hypohamiltonian graphs on 43, 46, and 49 vertices. Note that Graph 51107 contains \(H_4\).}\label{fig:6ph43more}
\end{figure}

\bibliography{bib}

\end{document}